\newcommand{\comment}[1]{}
\newtheorem{theorem}{Theorem}
\newtheorem{lemma}{Lemma}
\newcommand{\ffi}{\varphi}
\newcommand\ve{\varepsilon}
\newcommand\de{\delta}
\newcommand{\NN}{\mathbb N}
\newcommand{\ZZ}{\mathbb Z}
\newcommand{\RR}{\mathbb R}
\newcommand{\CC}{\mathbb C}
\newcommand{\TT}{\mathbb T}
\begin{document}
\title[Integral estimates]{On integral estimates of non-negative positive definite functions}
\author{}
\author{Andrey Efimov, Marcell Ga\'al and Szil\'ard Gy. R\'ev\'esz}
\address
{Andrey Efimov \newline \indent Ural Federal University \newline
\indent 620000 Ekaterinburg, pr. Lenina 51., RUSSIA}
\email{anothar@ya.ru}

\address{Marcell Ga\'al \newline  \indent Bolyai Institute, University of Szeged \newline  \indent 6720 Szeged, Aradi v\'ertan\'uk tere 1., HUNGARY \newline  \indent and \newline  \indent MTA-DE "Lend\"ulet" Functional Analysis Research Group \newline \indent Institute of Mathematics, University of Debrecen \newline \indent 4010 Debrecen, PO. Box 12, HUNGARY} \email{marcell.gaal.91@gmail.com}

\address{Szil\'ard Gy. R\'ev\'esz \newline  \indent Institute of Mathematics, Faculty of Sciences \newline \indent Budapest University of Technology and Economics \newline  \indent 1111 Budapest, M\H uegyetem rkp. 3-9., HUNGARY \newline  \indent and \newline  \indent A. R\'enyi Institute of Mathematics \newline \indent Hungarian Academy of Sciences, \newline \indent 1053 Budapest, Re\'Altanoda utca 13-15., HUNGARY} \email{revesz.szilard@renyi.mta.hu}

\begin{abstract} Let $\ell>0$ be arbitrary. We introduce the extremal quantities
\[
G(\ell):=\sup_{f} \dfrac{\int_{-\ell}^{\ell} f\,dx}{\int_{-1}^1 f\,dx},\quad
C(\ell):=\sup_{f} \sup_{a\in\RR} \dfrac{\int_{a-\ell}^{a+\ell} f\,dx}{\int_{-1}^1 f\,dx},
\]
where the supremum is taken over all not identically zero non-negative
positive definite functions. We are interested in the question
\textit{how large can the above extremal quantities be}? This problem
was originally posed by Yu. Shteinikov and S. Konyagin for the case
$\ell=2$. In this note we obtain exact values for the right limits
$\overline{\lim}_{\varepsilon\rightarrow 0+}G(k+\varepsilon)$ and
$\overline{\lim}_{\varepsilon\rightarrow 0+}C(k+\varepsilon)$ $(k\in
\NN)$, and sufficiently close bounds for other values of $\ell$. We
point out that the problem provides an extension of the classical
problem of Wiener.
\end{abstract}

\maketitle

\let\oldfootnote\thefootnote
\def\thefootnote{}
\footnotetext{} \footnotetext{This work was supported by the Program for State Support of Leading Scientific Schools of the Russian Federation (project no. NSh-9356.2016.1), by the Competitiveness Enhancement Program of the Ural Federal University (Enactment of the Government of the Russian Federation of March 16, 2013 no. 211, agreement no. 02.A03.21.0006 of August 27, 2013), and by Hungarian Science Foundation Grant \#'s NK-104183, K-109789. The second author was supported by the "Lend\" ulet" Program (LP2012-46/2012) of the Hungarian Academy of Sciences and the National Research, Development and Innovation Office -- NKFIH Reg. No. K115383.}
\let\thefootnote\oldfootnote

\bigskip
\bigskip

{\bf MSC 2000 Subject Classification.} Primary 42A82, 42A38, 26D15.

{\bf Keywords and phrases.} {\it non-negative positive definite function, Wiener's problem, Schur's theorem, Fourier transform, convolution square}

\bigskip
\bigskip
	
\section{Introduction}

At first let us fix some notation and basic concepts which will be used
throughout the sequel. The symbols $[\cdot]$ and $\lceil \cdot \rceil$
stand for the lower (or in other words, the usual) and the upper
integer part, respectively. We recall that a function $f:\RR \to \CC$
satisfying
\[
\sum_{i=1}^n\sum_{j=1}^n c_i\overline{c_j}f(x_i-x_j)\geq 0
\]
for any $n$-tuples $(x_i)_{i=1}^{n}\in\RR$ and complex numbers $(c_i)_{i=1}^{n}$ is called positive definite.
Positive definiteness of the function $f$ will be
denoted by $f\gg 0$. If the function $f$ is positive
definite and non-negative in the ordinary sense, then we
will say that $f$ is \emph{doubly positive} which we will also write as $f\ggg 0$. In what follows, the symbol $\star$ stands for the convolution.

The problem, formulated above, was originally posed by S. Konyagin and
Yu. Shteinikov, who wanted to use the estimate (in the case $\ell=2$)
for the paper \cite{Sht} dealing with number theory. In all our
investigations we take the liberty to discuss only the continuous case.
We believe that the transfer between the discrete and continuous
settings should not cause any difficulty.

We remark that the problem in question is closely related to the
celebrated Wiener's problem~\cite{Sh, Wiener}, i.e., the question if in
any $L^p$ norm and for any fixed $\delta>0$ the ratio
\[
\frac{\int_{-\pi}^{\pi}f^p(x)\,dx}{\int_{-\delta}^{\delta}f^p(x)\,dx}
\]
is bounded for all $2\pi$ periodical positive definite functions $f$.
Clearly, in an appropriate sense this holds for $p=\infty$, as we have
$\|f\|_\infty=f(0)$ for any positive definite function. Also, it can be
proved by means of the Parseval identity that for any given $\delta>0$,
we have
\[
\int_{-\pi}^{\pi} f^2\,dx \le \frac{2\pi}{\delta} \int_{-\delta}^{\delta} f^2\,dx
\]
whenever $f\in L^{\infty}$, say \cite{Sh, Wiener}. This, of course,
extends to any even $p=2m$ powers since if $f$ is positive definite,
then so is $f^m$. However, it is known that for no other exponents
$p\not\in 2\NN$ does such a finite bound hold. The first
counterexamples were constructed by Wainger \cite{Wa}, and the
strongest ones (with arbitrarily large gaps and only idempotent
polynomials in place of $f\in L^p$) can be found in \cite{BR}.

Furthermore, on the non-compact case of $\RR$, any bound between
integrals on $[-1,1]$ and $[-k,k]$ must grow to infinity with the
length $k$ as $\delta$ is fixed normalized to 1. This is explained in
\cite{GT} as "Wiener's property fails with $k\to\infty$". However, the
case is similar for $\delta\to 0$ in the torus $\TT:=\RR/\ZZ$ and on
the real line $\RR$ with $\de:=1$ and $k\to\infty$. The ratio in the
estimate must depend on the ratio of the corresponding intervals. In
this sense, both $\TT$ and $\RR$ behaves the same: there is a finite
upper bound exactly for $p\in 2\NN$ which bound happens to be linear in
the ratio of the compared intervals.

At this point let us note that the brave question under study is boldly
extending the classical Wiener's problem to the case of $L^1$ where it
is known to fail in general. The price we pay is that we restrict to
doubly positive functions instead of general positive definite
functions. However, this is in fact not a restriction but a
\emph{generalization}. Indeed, for any power $p=2m$, where Wiener's
problem has a positive answer, an estimate can be easily deducted from
the current setting if we observe the following: for any $f\gg 0$,
trivially $f^{2m}\ge 0$ and also by Schur's theorem $f^{2m}\gg 0$
whence $f^{2m}\ggg 0$. Thus the $L^1$-problem of Konyagin and
Shteinikov can be applied to deduce an answer to Wiener's problem even
if there is no Parseval identity at our help in this approach. In other
words, the positive answer in the question of Konyagin and Shteinikov
sheds light to the fact that somehow the positive cases of Wiener's
problem are not so intimately connected to Parseval's formula, while
the key now seems to be more of double positivity than any identity.

Konyagin's and Shteinikov's original question was answered positively
in~\cite{Gb} where Gorbachev found the following bound. (In fact, this
result was originally formulated for the discrete case.)

\begin{theorem}[D. V. Gorbachev]\label{pr:Gorbi} For any $f\ggg 0$ and $L>0$ we have
\[
\int_{-2L}^{2L} F dx\le \pi^2 \int_{-L}^{L} F\,dx.
\]
\end{theorem}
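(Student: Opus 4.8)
The plan is to read the inequality as the statement that one fixed, explicit function pairs non-negatively against the entire cone of doubly positive functions, and then to split that function so that the two separate sources of positivity of $f$ (namely $f\ge 0$ and $f\gg 0$) can each be used on a different piece. First I would invoke scale invariance: substituting $x=Lu$ turns $\int_{-2L}^{2L}f$ and $\int_{-L}^{L}f$ into $L\int_{-2}^{2}g$ and $L\int_{-1}^{1}g$ with $g(u):=f(Lu)$, and $g\ggg 0$ iff $f\ggg 0$. Thus it suffices to treat $L=1$, i.e. to prove $\int_{-2}^{2}f\le \pi^2\int_{-1}^{1}f$ for every $f\ggg 0$. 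Rewriting this as $\int_{\RR}f(x)\,\Phi(x)\,dx\ge 0$ with
\[
\Phi:=\pi^2\chi_{[-1,1]}-\chi_{[-2,2]},
\]
the goal becomes to show that $\Phi$ lies in the dual cone of the class of doubly positive functions.

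The central device is that double positivity is the intersection of two cones, the pointwise non-negative functions $\{g\ge 0\}$ and the positive definite functions $\{g\gg 0\}$, each of which is self-dual under the pairing $(g,h)\mapsto\int gh$ (the first trivially, the second by Bochner/Parseval, since for real even functions $\int gh=\frac{1}{2\pi}\int\hat g\,\hat h$ and both transforms are non-negative). The dual of an intersection of closed cones is the closure of the sum of the duals, so it suffices to produce a decomposition
\[
\Phi=\Phi_1+\Phi_2,\qquad \Phi_1\ge 0\ \text{pointwise},\qquad \Phi_2\gg 0.
\]
Granting such a splitting, the theorem follows in one line: $\int f\Phi_1\ge 0$ because $f\ge 0$ and $\Phi_1\ge 0$, while $\int f\Phi_2=\frac{1}{2\pi}\int\hat f\,\hat\Phi_2\ge 0$ because $\hat f\ge 0$ (from $f\gg 0$) and $\hat\Phi_2\ge 0$ (from $\Phi_2\gg 0$); summing gives $\int f\Phi\ge 0$.

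It is worth emphasizing why both halves are genuinely needed, since this pins down the role of each hypothesis. If one tried to conclude from positive definiteness alone, Parseval would reduce the claim to $\pi^2\frac{\sin\xi}{\xi}\ge\frac{\sin 2\xi}{\xi}$, that is, to $\sin\xi\,(\pi^2-2\cos\xi)\ge 0$; here $\pi^2-2\cos\xi>0$ always, but $\sin\xi$ changes sign, so the inequality \emph{fails} on the negative half-waves. The pointwise non-negativity of $f$ must be used to absorb exactly those bad frequencies, and this is precisely the job of the summand $\Phi_1\ge 0$.

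The hard part is the construction of the admissible pair $(\Phi_1,\Phi_2)$, equivalently the exhibition of a positive definite $\Phi_2$ with $\Phi_2\le\Phi$ pointwise: $\Phi_2\le -1$ on $1\le|x|\le 2$, $\Phi_2\le 0$ for $|x|\ge 2$, and $\Phi_2(0)\le\pi^2-1$. This is a Beurling--Selberg / Logan-type extremal (semi-infinite linear) problem, and I expect the determination of $\Phi_2$, together with the simultaneous verification that $\hat\Phi_2\ge 0$ and that $\Phi-\Phi_2\ge 0$, to be the main technical obstacle; everything preceding it is soft duality. The specific value $\pi^2$ should arise from a convenient positive definite building block of half-cosine / Fejér type, whose transform is a non-negative bump and whose first sign change is placed at the endpoint $x=1$; this need not be the sharp constant, which is exactly what the present paper sets out to refine.
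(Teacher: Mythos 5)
Your reduction is sound, and it is in fact exactly the duality scheme that the paper itself sets up (explicitly in the concluding remarks via the quantities $\gamma(\ell)$ and $\sigma(\ell)$, and operationally in the proof of Lemma~\ref{l:above}): to get $\int_{-2}^{2}f\le A\int_{-1}^{1}f$ it suffices to exhibit $H\gg 0$ with $H\le A\chi_{[-1,1]}-\chi_{[-2,2]}$ pointwise, since then $0\le\widehat{fH}(0)=\int fH$ (Schur's theorem, or equivalently your pairing of the positive definite cone with itself) while $f\ge 0$ handles the non-negative remainder $A\chi_{[-1,1]}-\chi_{[-2,2]}-H$. Your side remark that positive definiteness alone cannot suffice, because the Fourier transform of $\pi^{2}\chi_{[-1,1]}-\chi_{[-2,2]}$ changes sign, is also correct and genuinely clarifying. (Note that the paper does not prove Theorem~\ref{pr:Gorbi} at all --- it quotes it from Gorbachev --- but its Lemma~\ref{l:above} proves a stronger bound by precisely this method.)

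Nevertheless, as a proof of the stated inequality your text has a genuine gap, and you name it yourself: the minorant $\Phi_{2}$ is never constructed. The entire content of the theorem is the \emph{existence} of a positive definite function lying below $A\chi_{[-1,1]}-\chi_{[-2,2]}$ for some explicit finite $A$; the soft duality preceding it proves nothing on its own, since the abstract statement that the dual of an intersection of cones is the closed sum of the duals does not certify that this particular $\Phi$ admits such a splitting, nor does it produce the constant $\pi^{2}$. You also over-anticipate the difficulty: no Beurling--Selberg/Logan-type extremal analysis is needed, because elementary convolution squares already do the job. Concretely, with $T:=\chi_{[-1/2,1/2]}\star\chi_{[-1/2,1/2]}=(1-|x|)_{+}$ and $h_{c}:=g_{c}\star\widetilde{g_{c}}=2T-T(\cdot-c)-T(\cdot+c)$, where $g_{c}:=\chi_{[-1/2,1/2]}-\chi_{[c-1/2,c+1/2]}$, each $h_{c}$ is a convolution square and hence $h_{c}\gg 0$; the sum $H:=\sum_{j=0}^{4}h_{-2+j}$ satisfies $H\le 10\,\chi_{[-1,1]}-2\,\chi_{[-2,2]}$ because $T\le\chi_{[-1,1]}$ while the unit-spaced translated triangles sum to at least $1$ on $[-2,2]$. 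Feeding this $H$ into your scheme gives $\int_{-2}^{2}f\le 5\int_{-1}^{1}f$, i.e.\ your argument, once completed, yields $A=5$, which is better than $\pi^{2}$ (and is exactly the paper's bound $G(2)\le C(2)\le 5$). Until some such explicit $\Phi_{2}$, together with the verifications $\Phi_{2}\gg 0$ and $\Phi-\Phi_{2}\ge 0$, is written down, what you have is a correct reduction, not a proof.
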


In terms of $G(k)$, this result can be reformulated as $G(2)\leq \pi^2$.
By iterating the above estimate one can obtain some bound for all interval length
ratio $\ell$. It happens to be not linear, however, a linear
growth might be expected in virtue of the known results in Wiener's problem.

In what follows, we obtain bounds for the whole range of $\ell$ which will be of linear
growth. This indeed allows us a direct derivation of the
positive answers in Wiener's problem when the exponent is $p=2m$. For
the case $k=2$, our upper bound is $G(2)\leq 5$ which is somewhat
better than $\pi^2$.

\section{The result}

Let $G(k+0)$ and $C(k+0)$ be the right limits
$\overline{\lim}_{\varepsilon\rightarrow 0+}G(k+\varepsilon)$ and
$\overline{\lim}_{\varepsilon\rightarrow 0+}C(k+\varepsilon)$,
respectively. We note that both functions $G(\ell)$ and $C(\ell)$ are
non-decreasing in $(0,\infty)$, and that $G(\ell)\leq C(\ell)$; also,
$C(\ell)=G(\ell)=1$ on $(0,1]$. Our result\footnote{We are indebted to
Prof. V. Bogachev, who nicely disproved our initial and naive guess
that perhaps even $C(1)=1$, i.e. $\int_{a-1}^{a+1} f  \le \int_{-1}^1
f\, (f\gg0)$ could hold. \\ \indent Actually, he took $f$ to be the
probability density function arising from of a convolution square of
some symmetric probability distribution. Then $f$ will be symmetric,
non-negative and positive definite. It is clearly possible that at some
point $L>0$ $f$ has $f'(L)>0$, whence by symmetry also $f(-L)=f(L)$ and
$f'(-L)=-f'(L)$. Now clearly if $\Phi(a):=\int_{a}^{a+2L} f$, then
$\Phi'(a)=f(a+2L)-f(a)$ and $\Phi"(a)=f'(a+2L)-f'(a)$, whence for a
small change $\delta>0$ we must have $\Phi(-L+\delta)-\Phi(-L)\approx
\Phi'(-L) \delta + \Phi"(-L) \delta^2/2 = f'(L)\delta^2 >0$, and so
with $a=-L+\delta$ the proposed inequality fails.} reads as follows.
\begin{theorem}\label{th:CG}
	For the extremal constant functions $G(\ell)$ and $C(\ell)$ the
following estimates hold. 	

1. {\bf Lower bound.} For any $\ell\in\RR\backslash\NN$, we have $G(\ell), C(\ell)\geq 2[\ell]+1$.

Moreover, for all $k\in\NN$, we have $G(k)\geq 2k-1$ and $C(k)\geq
2k$.

2. {\bf Upper bound.} For any $\ell\ge 1$, we have

\begin{equation}\label{CGupperbound}
G(\ell) \le C(\ell) \le \frac{1}{2}\frac{([2\ell]+1)([2\ell]+2)}{[2\ell]+1-\ell} \le \lceil 2 \ell \rceil +1.
\end{equation}

3. {\bf Sharpness.} As a consequence of the above, our bounds are exact
for $G(k+0)$ and $C(k+0)$, i.e., we have $\lim_{\ell\to k+0} G(\ell) =
\lim_{\ell\to k+0} C(\ell)= 2k+1$ for all $k\in \NN$.
\end{theorem}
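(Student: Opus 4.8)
The plan is to treat the three parts separately: the lower bound by an explicit construction, the upper bound by a majorization/duality argument on the Fourier side, and the sharpness by squeezing the two at the right endpoints $\ell=k+0$.

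For the \textbf{lower bound} I would exhibit, for each target ratio, an explicit doubly positive test function built as a product of two doubly positive factors. Fix a period $p>1$ close to $1$ and take the normalized Fej\'er comb $\Phi_{N,p}(x)=\bigl(\tfrac{\sin(N\pi x/p)}{N\sin(\pi x/p)}\bigr)^2$, a nonnegative $p$-periodic function whose Fourier coefficients are triangular, hence nonnegative, so that $\Phi_{N,p}\gg 0$ on $\RR$; and the slowly varying envelope $E_\varepsilon(x)=\bigl(\tfrac{\sin(\pi\varepsilon x)}{\pi\varepsilon x}\bigr)^2$, which is nonnegative with nonnegative (triangular) Fourier transform. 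By Schur's theorem the product $f=\Phi_{N,p}\,E_\varepsilon$ is positive definite, and it is manifestly nonnegative and integrable, so $f\ggg 0$. As $N\to\infty$ the mass of $\Phi_{N,p}$ concentrates into unit peaks at the multiples of $p$; since $p>1$, the window $[-1,1]$ captures only the central peak, whereas $[-\ell,\ell]$ captures the peaks at $0,\pm p,\dots,\pm[\ell/p]\,p$. Letting first $N\to\infty$, then $\varepsilon\to0+$ (so $E_\varepsilon\to1$ uniformly on the relevant compact set), and finally $p\to1+$, the ratio tends to the number of captured peaks. For $\ell\notin\NN$ this count is $2[\ell]+1$, giving $C(\ell)\ge G(\ell)\ge 2[\ell]+1$. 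For integer $\ell=k$ the central window $[-k,k]$ captures only $2k-1$ peaks, giving $G(k)\ge 2k-1$; optimizing the center $a$ of the length-$2k$ window in the definition of $C$ lets one capture $2k$ peaks, whence $C(k)\ge 2k$.

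For the \textbf{upper bound} it suffices, since $G\le C$, to bound $C(\ell)$; by nonnegativity of $f$ and translation I reduce to estimating $\int_{a-\ell}^{a+\ell}f$ for arbitrary $a$ against $\int_{-1}^1 f$. Writing $f=\hat\mu$ with $\mu=\hat f\ge 0$ by Bochner's theorem (and keeping the dual constraint $\hat\mu=f\ge0$), the two integrals become $\int_{-1}^1 f=\int\frac{\sin 2\pi\xi}{\pi\xi}\,d\mu(\xi)$ and $\int_{a-\ell}^{a+\ell}f=\int\cos(2\pi a\xi)\frac{\sin 2\pi\ell\xi}{\pi\xi}\,d\mu(\xi)$. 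The estimate $\int_{a-\ell}^{a+\ell}f\le C\int_{-1}^1 f$ for all admissible $\mu$ is then an instance of linear-programming duality over the cone of nonnegative, positive definite measures: it holds as soon as one produces a positive definite \emph{corrector} $P$ (the Fourier transform of a nonnegative measure, so that $\int P\,d\mu\ge 0$) with
\[
\cos(2\pi a\xi)\frac{\sin 2\pi\ell\xi}{\pi\xi}+P(\xi)\le C\,\frac{\sin 2\pi\xi}{\pi\xi}\qquad(\xi\in\RR).
\]
Here $P$ may be negative (down to $-P(0)$), which is exactly what lets the right-hand side dominate near the sign changes of $\frac{\sin 2\pi\xi}{\pi\xi}$ at the half-integers. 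I expect the extremal corrector to be a Fej\'er-type kernel whose order is tied to $N:=[2\ell]$: its total mass is the triangular number $\tfrac12(N+1)(N+2)$, while the slack at the critical frequencies produces the denominator $N+1-\ell$, giving $C(\ell)\le\tfrac12\frac{([2\ell]+1)([2\ell]+2)}{[2\ell]+1-\ell}$. The final simplification $\le\lceil 2\ell\rceil+1$ is elementary, reducing to $2\ell\le N+1$, which is immediate from $N=[2\ell]$.

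The \textbf{main obstacle} is precisely the explicit construction and evaluation of this extremal certificate: one must choose $P$ optimally and verify both the pointwise domination above and the exact value of the constant. Once the upper bound is established, \textbf{sharpness} follows by squeezing. For $\ell\in(k,k+1)$ one has $[\ell]=k$, so the lower bound gives $G(\ell),C(\ell)\ge 2k+1$; and for $\ell\in(k,k+\tfrac12)$ one has $[2\ell]=2k$, so the upper bound gives $C(\ell)\le\tfrac12\frac{(2k+1)(2k+2)}{2k+1-\ell}\to 2k+1$ as $\ell\to k+0$. Combining these with $G\le C$ pinches $\lim_{\ell\to k+0}G(\ell)=\lim_{\ell\to k+0}C(\ell)=2k+1$, which is Part 3.
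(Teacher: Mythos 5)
Your lower-bound construction and the final squeezing argument are both correct and essentially coincide with the paper's: the paper likewise uses a periodic approximate Dirac comb of period $p>1$ slightly exceeding $1$ (it takes $f_n(x)=\cos^{2n}(\pi x/p)$ rather than a Fej\'er kernel times a sinc-squared envelope, but the peak-counting, the order of limits, and the off-center window for $C(k)\ge 2k$ are the same), and Part 3 is obtained by exactly the pinching you describe. (The paper derives $G(k)\ge 2k-1$ from monotonicity, $G(k)\ge G(k-1+0)$, but your direct count works too.)

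The genuine gap is in Part 2. Your duality framework is sound --- it is in fact the Fourier-dual formulation of what the paper does, and the paper's concluding remarks set up precisely this kind of linear program --- but the entire content of the upper bound lies in exhibiting and verifying the certificate, and you explicitly leave that step open: you only \emph{expect} the corrector to be a Fej\'er-type kernel and \emph{guess} that its mass produces the triangular number $\tfrac12(N+1)(N+2)$ with $N=[2\ell]$ and the slack $N+1-\ell$. Without a concrete $P$ and a verified pointwise domination, Part 2 (and hence the upper half of Part 3) is unproved. For comparison, the paper's certificate lives on the space side: with $\chi:=\chi_{[-1/2,1/2]}$, $T:=\chi\star\chi$, $g_a:=\chi-\chi(\cdot-a)$ and $h_a:=g_a\star\widetilde{g_a}=2T-T(\cdot+a)-T(\cdot-a)\gg0$, one forms
\[
H_{a,k,p}:=\sum_{j=0}^{k}h_{a+j(2-p)}\le 2(k+1)\chi_{[-1,1]}-p\left(\chi_{[a-1+p,\,a+k(2-p)+1-p]}+\chi_{[-a-k(2-p)-1+p,\,1-a-p]}\right),
\]
the point being that the translated triangles, spaced $2-p\le 2$ apart, overlap enough to dominate the constant $p$ on the long interval. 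Schur's theorem gives $H_{a,k,p}f\gg0$, so $0\le\widehat{H_{a,k,p}f}(0)$ yields $C\bigl(\tfrac{(k+1)(2-p)-p}{2}\bigr)\le\tfrac{k+1}{p}$; writing $\ell=\tfrac{(k+1)(2-p)-p}{2}$, i.e. $p=\tfrac{2(k+1)-2\ell}{k+2}$ with $\ell-1<k\le 2\ell$, and minimizing $\tfrac12\tfrac{(k+1)(k+2)}{k+1-\ell}$ over such integers $k$ (the minimum is attained at $k=[2\ell]$) gives \eqref{CGupperbound}. To complete your dual version you would have to carry out the analogous explicit construction of $P$ and check the stated inequality for every $\xi$; until then the proof is incomplete. (Your final reduction of $\tfrac12\tfrac{([2\ell]+1)([2\ell]+2)}{[2\ell]+1-\ell}\le\lceil 2\ell\rceil+1$ to $2\ell\le[2\ell]+1$ is correct for $2\ell\notin\NN$; for $2\ell\in\NN$ one checks directly that equality holds, as the paper notes.)
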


We remark that instead of the space of doubly positive functions, we could consider the space of smooth doubly positive functions or only measurable doubly positive functions. However, the constants would not differ essentially.

The proof of Theorem \ref{th:CG} is composed of two lemmas. Before presenting them, let us explain the idea implemented in Lemma 1 since the actual formulas may hide it a little.

Our strategy is the following. We consider the so-called periodically
extended Dirac delta, that is, $\Phi := \sum_{k=-\infty}^{\infty}
\delta_{kp}$. This "function" is obviously non-negative and positive
definite since it can be regarded as the characteristic function of a
group, namely, the discrete group $p\ZZ$. Here the period $p$ is chosen
to be $1+\ve$ in order to minimize the presence of values $pz$ $(z \in
\ZZ)$ in the segment $[-1,1]$, but at the same time make these values
as densely occurring in other intervals as possible. It is easy to show
that $\int_{-1}^{1} \Phi \,dx=1$ and, for any given length $kp<\ell<
(k+1)p$, we will have $\int_{-\ell}^\ell \Phi \,dx= 2k+1$ and
$\int_{-0}^{\ell-0} \Phi \,dx=k+1$. The only technical matter is to
make this construction fitting into the class of doubly positive
functions. We will do this below.

\begin{lemma}\label{l:below}
For all $k\in \NN$ we have $G(k+0) \geq 2k+1$. Moreover, $C(k)\geq
2k$.
\end{lemma}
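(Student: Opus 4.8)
The plan is to construct, for each $k \in \NN$, an explicit doubly positive function (or a limiting family of them) realizing the lower bounds $G(k+0) \geq 2k+1$ and $C(k) \geq 2k$. Following the heuristic explained just before the lemma, the natural candidate is the periodically extended Dirac delta $\Phi_p := \sum_{n=-\infty}^{\infty} \delta_{np}$ with period $p = 1+\ve$, which is non-negative and positive definite (being the "characteristic function" of the subgroup $p\ZZ$), and which satisfies $\int_{-1}^{1}\Phi_p = 1$ while $\int_{-\ell}^{\ell}\Phi_p = 2k+1$ for $kp < \ell < (k+1)p$. The catch is that $\Phi_p$ is a measure, not a genuine doubly positive function, so the heart of the argument is a regularization that stays inside the admissible class.

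First I would realize $\Phi_p$ as a legitimate doubly positive function by convolving or mollifying it. A clean way is to start from the Fejér-type kernel: the function $f_N(x) = \left(\frac{\sin(N\pi x/p)}{N\sin(\pi x/p)}\right)^2$ is non-negative, positive definite (it is a convolution square of a non-negative trigonometric sum in the variable $x/p$, and positive definiteness is preserved under the dilation $x \mapsto x/p$), and as $N \to \infty$ it concentrates mass at the points of $p\ZZ$, approximating $\Phi_p$ after suitable normalization. Positive definiteness here follows because $f_N \ggg 0$: it is manifestly non-negative, and it is the square modulus of a Fourier sum, hence positive definite by Schur's theorem or directly by Bochner. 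I would then compute $\int_{-1}^1 f_N$ and $\int_{-\ell}^{\ell} f_N$ and take $N \to \infty$.

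The key steps, in order, are: (i) verify $f_N \ggg 0$ for each $N$; (ii) show that the normalized integral $\int_{-1}^{1} f_N \,dx$ converges (to the mass of the single lattice point $0 \in [-1,1]$, as only $x=0$ lies in $(-1,1)$ when $p = 1+\ve > 1$); (iii) show that $\int_{-\ell}^{\ell} f_N \,dx$ converges to $(2k+1)$ times that same mass for $kp < \ell < (k+1)p$, since exactly the lattice points $0, \pm p, \dots, \pm kp$ fall inside $(-\ell, \ell)$; (iv) assemble the ratio, conclude $G(\ell) \geq 2k+1$ for every $\ell \in (kp, (k+1)p)$, and pass to the limit $\ve \to 0+$ (equivalently $p \to 1+$) to obtain $G(k+0) \geq 2k+1$. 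For the second claim $C(k) \geq 2k$, I would instead center the longer interval off-origin: with the shift $a$ chosen so that $(a-\ell, a+\ell)$ just captures $2k$ of the lattice points while the denominator interval $(-1,1)$ still captures only the point $0$, the supremum over $a$ in the definition of $C$ delivers the extra factor. At $\ell = k$ exactly, a half-open interval of length $2k$ can be positioned to contain precisely $2k$ lattice points $p\ZZ$, yielding $C(k) \geq 2k$.

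The main obstacle will be the delicate boundary bookkeeping in step (iii)–(iv): one must track exactly which lattice points $np$ lie strictly inside the open interval $(-\ell, \ell)$ versus on its boundary, and ensure the mollification does not leak a nontrivial fraction of a boundary point's mass across the endpoint in the limit. This is precisely why the right limit $G(k+0)$ rather than $G(k)$ appears — taking $\ell \to k+0$ (with $p \to 1+$ so that $kp < \ell$) guarantees the $k$-th pair of points $\pm kp$ is strictly interior and contributes fully, avoiding the half-weight ambiguity that would occur at the exact endpoint. I expect the integral estimates themselves to be routine (standard Fejér/Dirichlet kernel concentration), so the care lies entirely in the order of limits $N \to \infty$ then $\ve \to 0+$, and in the geometric placement of intervals relative to the lattice $p\ZZ$.
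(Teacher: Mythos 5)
Your proposal is correct and follows essentially the same route as the paper: both regularize the periodic Dirac comb on $p\ZZ$ with $p=1+\ve>1$ by a $p$-periodic doubly positive kernel concentrating at the lattice points, count the bumps captured by $[-1,1]$ versus $[-k-\ve,k+\ve]$ (resp.\ a shifted interval of length $2k$ such as $[1,2k+1]$ for $C(k)$), and then let the concentration parameter tend to infinity before letting $p\to 1+$. The only difference is cosmetic --- the paper uses $\cos^{2n}(\pi x/p)$ where you use the Fej\'er kernel --- and your bookkeeping of which lattice points lie strictly inside each interval matches the paper's conditions $\de<p-1$, $k(p-1)+\de<\ve$ and $2k(p-1)+\de<1$.
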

\begin{proof}
Let us fix $k\in \NN$ and $\ve>0$. We are to estimate $C(k)$ and
$G(k+\ve)$ from below.

Let $f_n(x):=\cos^{2n}(\frac{\pi}{p}x)$ where $1\le p <1.1$ and $n$ is assumed to be large enough. Clearly, for all values of the parameters $n$ and $p$ the function $f_n$ is doubly positive and $p$-periodic.
It is easy to see that with any given fixed value of $\de \in (0,0.1)$,
\[
\frac{\int_{-\de}^{\de}f_n(x)\,dx}{\int_{-p/2}^{p/2}f_n(x)\,dx}\rightarrow{}1 \quad (n\rightarrow +\infty),
\]
i.e., the function $f_n$ is concentrated in the segment $(-\de,\de)$
with respect to the period (in the limit, when $n\to\infty$). Thus, we
see that $f_n$ is concentrated in $\cup_{m\in\NN}\Omega_m$ where
$\Omega_m:=(-\de+mp,\de+mp)$.

To make estimates for $\int_{-k-\ve}^{k+\ve}f_n(x)\,dx$ for some
$0<\ve<1$, we have to find how many segments $\Omega_m$ are contained
in the intervals $[-k-\ve,k+\ve]$. If we chose $\de <p-1$, then the
interval $[-1,1]$ contains only one of $\Omega_m$, namely $\Omega_0$,
and is disjoint from the rest.

Now if we take $\de$ small enough, and $p$ sufficiently close to $1$
(more exactly, if $k(p-1)+\de<\ve$), then the interval $[-k-\ve,k+\ve]$
already contains all $\Omega_m$ with $-k\le m\le k$. Thus $G(k+\ve)\geq
(2k+1)$, whence also $G(k+0)\ge 2k+1$.

Furthermore, the inequality $C(k)\geq 2k$ can be easily seen from
considering e.g. the interval $[1,2k+1]$ which contains $\Omega_m$ for
all $m=1,\dots,2k$ whenever $2k(p-1)+\de <1$.
\end{proof}

As the functions $G(\ell)$ and $ C(\ell)$ are both non-decreasing, and
$C(\ell) \ge  G(\ell)$, we obtain the first lower bound of Theorem
\ref{th:CG}. As for $G(k)$, clearly we have $G(1)=1$, while for $k>1$ we can
easily use monotonicity of $G$ to derive $G(k)\ge G(k-1+0)\ge 2k-1$.
The other estimate $C(k)\ge 2k$ is contained in the above Lemma, whence
Part 1 of Theorem \ref{th:CG} is proved.

\begin{lemma}\label{l:above} We have for any $\ell>1$ the inequality
\begin{equation}\label{eq:Cabove}
C(\ell)\leq \frac{1}{2}\frac{([2\ell]+1)([2\ell]+2)}{[2\ell]+1-\ell}.
\end{equation}
\end{lemma}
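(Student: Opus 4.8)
The plan is to pass from the indicator of an interval to a genuinely doubly positive majorant obtained by convolving $f$ with a fixed tent, and thereby reduce the whole problem to a purely geometric covering question. Let $T(x):=(1-|x|)_+$, which is non-negative, supported on $[-1,1]$, and positive definite (its Fourier transform is the Fej\'er kernel, hence $\widehat T\ge 0$). Put $F:=f\star T$. Then $F\ge 0$ since $f,T\ge 0$, and $\widehat F=\widehat f\,\widehat T\ge 0$ by Bochner's theorem, so $F\ggg 0$; in particular $0\le F(c)\le F(0)$ for every $c$. Moreover $F(0)=\int_{-1}^{1}f(t)(1-|t|)\,dt\le \int_{-1}^{1}f$. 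The key point is that for any centres $c_m$ and weights $\lambda_m\ge 0$ with $\sum_m \lambda_m T(\cdot-c_m)\ge \mathbf{1}_{[-\ell,\ell]}$ pointwise, and any $a\in\RR$, non-negativity of $f$ gives
\[
\int_{a-\ell}^{a+\ell}f=\int_{\RR} f(x)\,\mathbf{1}_{[-\ell,\ell]}(x-a)\,dx\le \sum_m \lambda_m\!\int_{\RR} f(x)\,T(x-a-c_m)\,dx=\sum_m \lambda_m\,F(a+c_m)\le\Big(\sum_m\lambda_m\Big)\int_{-1}^{1}f .
\]
Since the bound is uniform in $a$, taking the supremum over $a$ and over $f\ggg 0$ yields $C(\ell)\le \gamma(\ell)$, where $\gamma(\ell)$ is the minimal total weight $\sum_m\lambda_m$ of a covering of $\mathbf{1}_{[-\ell,\ell]}$ by non-negative translates of the unit tent $T$.

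It then remains to exhibit one economical covering, i.e.\ an explicit admissible family $(\lambda_m,c_m)$ with $\sum_m\lambda_m=\tfrac12\,([2\ell]+1)([2\ell]+2)/([2\ell]+1-\ell)$; any feasible covering gives a valid upper bound, so no optimality claim is logically needed. The natural construction, writing $M:=[2\ell]+1$, places tents at a uniform spacing $d\le 1$ across a symmetric window slightly larger than $[-\ell,\ell]$ with a base weight making the interior superposition identically $1$ (tents at spacing $d$ form a partition of unity up to the factor $1/d$, which supplies the bulk contribution of size roughly $2\ell$), and then adds extra mass on the outermost tents to compensate for the linear decay of $T$ near the endpoints $\pm\ell$. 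Summing the increasing boundary weights over the ``triangle'' of tents that taper off at each end produces the numerator $\tfrac12 M(M+1)=1+2+\cdots+M$, while requiring the covering to reach exactly to $\pm\ell$ pins down the spacing in terms of $M-\ell$ and yields the denominator. The integer $M=[2\ell]+1$ enters precisely as the relevant count of lattice translates, and its origin in the fractional part of $2\ell$ is what forces the non-trivial denominator $M-\ell$ rather than the crude value $M$ obtained at integer $\ell$. Equivalently, $\gamma(\ell)$ is the value of a linear program, and one may instead solve it by exhibiting a primal covering together with a matching dual packing supported on $[-\ell,\ell]$ with $(\nu\star T)\le 1$.

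The main obstacle is this second step: verifying that the proposed superposition satisfies $\sum_m\lambda_m T(\cdot-c_m)\ge 1$ on the entire continuum $[-\ell,\ell]$, not merely at the tent centres, since the binding constraints occur in the gaps where $T$ has decayed and the boundary tents only partially overlap. Carrying out the boundary optimisation carefully enough to land on the exact closed form, and checking that the interior and boundary pieces match continuously, is where the real work lies. Once the covering is in place we obtain $C(\ell)\le \tfrac12 M(M+1)/(M-\ell)$, and since $M=[2\ell]+1>2\ell$ forces $M-\ell>M/2$, the elementary estimate $\tfrac12 M(M+1)/(M-\ell)<M+1=\lceil 2\ell\rceil+1$ gives the final simplified bound in \eqref{CGupperbound} at no extra cost.
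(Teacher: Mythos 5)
Your first paragraph---the reduction of the lemma to a tent-covering problem---is correct, and once unwound it is essentially the paper's own mechanism in different packaging: the paper applies Schur's theorem to $fH_{a,k,p}$, where $H_{a,k,p}$ is a sum of convolution squares $h_a=2T-T(\cdot+a)-T(\cdot-a)$, and the resulting inequality $\int f\,h_a\,dx\ge 0$ is precisely your estimate $F(a)+F(-a)\le 2F(0)$ for $F=f\star T$, while the paper's pointwise majorization of $H_{a,k,p}$ by $2(k+1)\chi_{[-1,1]}-p(\chi_J+\chi_{-J})$ plays exactly the role of your covering inequality. So the inequality $C(\ell)\le\inf\sum_m\lambda_m$ over admissible tent coverings of $\chi_{[-\ell,\ell]}$ is sound, at the same level of rigor as the paper.

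The genuine gap is the second step, which you yourself flag as ``where the real work lies'' but never carry out: no covering with total weight $\tfrac12([2\ell]+1)([2\ell]+2)/([2\ell]+1-\ell)$ is exhibited or verified, and the heuristic you sketch points the wrong way. In a covering that actually attains this value all tents carry \emph{equal} weight, and the numerator is not a triangular sum $1+2+\dots+M$ of boundary weights. Concretely: take $k=[2\ell]$ and $p=\frac{2(k+1)-2\ell}{k+2}\in(0,1]$, and place $k+1$ tents, each of weight $1/p$, at centers spaced $2-p$ apart, symmetric about the origin. At a point at distance $t$ from a center, with $1-p\le t\le 1$, the two nearest tents alone give
\[
\frac1p\bigl[(1-t)+(1-(2-p)+t)\bigr]=1,
\]
while for $t<1-p$ (resp.\ $t>1$) the single nearer tent already gives more than $1$; moreover a lone extreme tent stays $\ge 1$ up to distance $1-p$ beyond the last center. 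Hence the sum dominates $1$ on an interval of length $k(2-p)+2(1-p)=2\ell$, i.e.\ on $[-\ell,\ell]$, with total weight $(k+1)/p=\tfrac12\frac{([2\ell]+1)([2\ell]+2)}{[2\ell]+1-\ell}$, which is \eqref{eq:Cabove}. Note that the spacing here is $2-p\ge 1$, not ``$d\le 1$'' as in your sketch; a uniform-spacing variant with $d\le1$ does work, but it needs $[2\ell]+2$ tents of equal weight $1/(2-d)$ with $d=2\ell/([2\ell]+1)$, again with no boundary corrections. Finally, your closing derivation of the last inequality of \eqref{CGupperbound} is only valid for $2\ell\notin\NN$: if $2\ell=m\in\NN$ then $M+1=\lceil 2\ell\rceil+2$, and the correct statement there is equality $\tfrac12 M(M+1)/(M-\ell)=\lceil2\ell\rceil+1$, not a strict bound below $M+1$.
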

\begin{proof}
Fix the interval $I:=[-1/2,1/2]$. For temporary use let us denote by
$\chi:=\chi_I$ the characteristic function of $I$, and we denote by
$\chi_a(x):=\chi(x-a)$ for indices $a\in \RR$. We shall also use the
triangle function $T:=\chi\star\chi=(1-|x|)_{+}$ (where $\xi_{+}:=\max
(\xi, 0)$) which is an important example of a non-negative positive
definite function.

Consider the functions $g_a :=\chi - \chi_a$ and $h_a:=g_a \star
\widetilde{g_a}$, where $\widetilde{g_a}(x):=\overline{g_a(-x)}$. So, in view of $g_a$
%if $g_a$
%is considered to be
being a real-valued function, then
$\widetilde{g_a}(x)=g_a(-x)=\chi(x)-\chi_{-a}(x)$. Then obviously
$h_a(x)=T(x) -T(x+a) -T(x-a) +T(x)=2T(x) -(T(x+a)+T(x-a))$. Because $h$
is defined as a convolution square, it is obvious that $h_a\gg 0$ for
any $a\in\RR$.

Let us involve here an additional parameter $p$ with $0<p\leq1$.
Take now %%%Taking the sum of $h_a$ for $a,a+1(2-p),\dots,a+k(2-p)$
the sum %%%we get
$H_{a,k,p}:= \sum_{j=0}^k h_{a+j(2-p)}$ with some $k\in \NN$. Then we
can estimate $H_{a,k,p}$ the following way:
\begin{eqnarray*}
H_{a,k,p}(x)&:=&\sum_{j=0}^k h_{a+j(2-p)}(x) \\&=&2(k+1)T(x) - \left(\sum_{j=0}^k T(x+a+j(2-p)) ~+~\sum_{j=0}^kT(x-a-j(2-p))\right)
\\ & \le& 2(k+1)\chi_{[-1,1]}(x) - p (\chi_{[a-1+p,a+k(2-p)+1-p]}(x)+\chi_{[-a-k(2-p)-1+p,1-a-p]}(x)).
\end{eqnarray*}
Note that $H\gg 0$ together with its summands $h_{a+j(2-p)}$.
Multiplying by any (say, continuous) doubly positive function $f$ we
get by an application of Schur's theorem $Hf\gg 0$, whence
$$
0\le \widehat{Hf}(0)=\int_{-\infty}^\infty Hf \le 2(k+1) \int_{-1}^1 f(x)\,dx - 2p \int_{a+p-1}^ {a+k(2-p)+1-p} f(x)\,dx,
$$
using also that $f$, as a positive definite real-valued function, is
necessarily even.

Let now $b:=a+p-1$. It follows that
$$p \int_{b}^ {b+k(2-p)+2-2p} f(x)\, dx\leq (k+1) \int_{-1}^1 f(x)\, dx.$$
That is, in terms of $C(\ell)$,
$$C\left(\frac{(k+1)(2-p)-p}{2}\right)\leq\frac{k+1}{p}.$$
So let us take an arbitrary $\ell>1$, and write it in the form
$\ell=\frac{(k+1)(2-p)-p}{2}$ with suitable values of $k$ and $p$. This
is equivalent to $p=\frac{2(k+1)-2\ell}{k+2}$. Further, the double
inequality $0<p\leq 1$ is equivalent to $\ell-1< k\leq 2\ell$, whence
for $C(\ell)$ we obtain that the estimate
\begin{equation}\label{reveszefimov315}
C(\ell)\leq \frac{1}{2}\frac{(k+1)(k+2)}{k+1-\ell}
\end{equation}
holds true for any integer $k$ between $\ell$ and $2\ell$ (and with
the respective choice of parameter $p$). So, it remains to minimize
estimate (\ref{reveszefimov315}) in the range $k\in(\ell-1,2\ell]$ for
any fixed $\ell>1$.

In order to do so, consider the auxiliary functions
\[
\ffi:(\ell-1,2\ell]\to\RR,\quad \ffi(x):=\frac{(x+1)(x+2)}{x+1-\ell}=x+\ell+2+\frac{\ell(\ell+1)}{x+1-\ell}
\]
and
\[
\psi:(\ell,2\ell]\to\RR,\quad \psi(x):=\ffi(x)-\ffi(x-1).
\]
Straightforward computation gives us that $\psi(x)=0$ is equivalent to
\[
\frac{(x+1)(x+2)}{x+1-\ell}=\frac{x(x+1)}{(x-\ell)},
\]
the unique solution of which on the interval $(\ell,2\ell]$ being
$x=2\ell$. Since $\psi$ is continuous we deduce that the sign of the
function $\psi$ does not vary on the interior of its domain. Further,
as $\ell>1$, we find
\[
\psi(\ell+1)=\ffi(\ell+1)-\ffi(\ell)=1-\frac{\ell(\ell+1)}{2} <0.
\]
This yields that the function $\psi$ is negative in $(\ell,2\ell)$,
whence $\ffi$ is non-increasing on the set of integers in
$(\ell-1,2\ell]$. Therefore, the minimum of $\ffi$ is certainly
achieved at the unique integer in $(2\ell-1,2\ell]$, in other words at
$[2\ell]$. S, we substitute $k=[2\ell]$ in \eqref{reveszefimov315}
which indeed yields the desired inequality \eqref{eq:Cabove}.
\end{proof}

%%%%The resulting upper bound in presented in Figure 2.
%%%%\begin{figure}[H]
%%%%    \centering
%%%%    \includegraphics[width=\textwidth]{Revesz_Efimov_IE_img2}
%%%%    \caption{Bounds for $C(l)$}
%%%%
%%%%\end{figure}
%%%%

\bigskip

The last inequality in \eqref{CGupperbound} can be obtained easily
considering separately the cases where $2\ell=m\in \NN$ (providing
equality), and where $2\ell\not\in \NN$ (leading to strict inequality).

As in the above argument concerning the lower bound, the proof of the
upper estimate will be completed adding that $G(\ell)\le C(\ell)$,
always.

\bigskip
Finally, we are in a position to prove Part 3, that is, the
sharpness statement.

%%%\begin{proof} [Proof of Theorem \ref{th:CG}]
The inequality $G(k+0) \ge 2k+1$ is clear, because the lower estimate
in Part 1 provides $G(k+\ve)\ge [2(k+\ve)]+1 =2k+1$ for arbitrary
$\ve>0$. Moreover, from Lemma \ref{l:above} we also get
\[
\begin{gathered}
\displaystyle C(k+0)=\lim_{\ve\to+0} C(k+\ve) \le
\frac{1}{2}\frac{([2k+2\ve]+1)([2k+2\ve]+2)}{[2k+2\ve]+1-(k+\ve)} =\\
\lim_{\ve\to+0}\frac{1}{2}\frac{(2k+1)(2k+2)}{2k+1-(k+\ve)} \le 2k+1.
\end{gathered}
\]
Altogether, we have $2k+1\le G(k+0)\le C(k+0)\le 2k+1$ and equality holds everywhere,
as needed.

Therefore, the proof of Theorem \ref{th:CG} is complete.
%%%\end{proof}

\section{Concluding remarks}

Let us see what general framework for the construction of the above estimates and proofs can be set up. Basically, what we do is to look for an auxiliary function $H$, positive definite itself, and satisfying
\[
H \le A \chi_{[-1,1]}- B\chi_{[a,a+\ell]} - B \chi_{[-a-\ell,-a]}
\]
or
\[
H \le A \chi_{[-1,1]}- B\chi_{[-\ell,\ell]}.
\]
By Schur's theorem we also have that $fH\gg 0$ for any $f\gg 0$, whence
\[
0 \le \widehat{fH}(0) = \int_{-\infty}^\infty fH \le A \int_{-1}^1 f - 2B \int_{a}^{a+\ell} f
\]
and $C(\ell/2) \le A/2B$, or
\[
0 \le \widehat{fH}(0) = \int_{-\infty}^\infty fH \le A \int_{-1}^1 f - B \int_{-\ell}^{\ell} f
\]
and $G(\ell) \le A/B$.

Let $\ell >0$ be arbitrary. Let us now consider the extremal quantities
\[
\sigma(a,\ell):= \inf \left\{ \frac{A}{2B}~:~ \exists H\gg 0, H\le A \chi_{[-1,1]}- B\chi_{[a,a+\ell]}-B\chi_{[-a-\ell,-a]}  \right\}, ~~ \sigma(\ell):=\sup_{a\in \RR} \sigma(a,\ell),
\]
and
\[
\gamma(\ell):= 2 \sigma(0,\ell)=\inf \left\{ \frac{A}{B}~:~ \exists H\gg 0, H\le A \chi_{[-1,1]}- B\chi_{[-\ell,\ell]}  \right\}.
\]

Clearly, from the above it follows that we have $C(\ell/2) \le
\sigma(\ell)$ and $G(\ell)\le \gamma(\ell)$, always. Let us make a few
additional remarks here. First, the setting here is quite general, but
at least in $\RR$ any "reasonable" positive definite function $H$ can
be represented as a "convolution-square": $H=G\star\widetilde{G}$, of
say some $G\in L^2$ function, see e.g. \cite{EGR}. The construction of
$H:=H_{a,k,p}$ worked somehow along different lines, for we instead
represented $H$ as the sum of other convolution squares with
well-controlled supports: but in principle the direct convolution
square representation is also possible.

The above defined extremal problems $\sigma$ and $\gamma$ are very much
like the so-called Tur\'an or Delsarte extremal problems. The main
difference is that here we want to compare integrals over given
intervals to integrals over given central pieces, while in the Tur\'an
and Delsarte problems we normalize with respect to $f(0)$ and compare
to this normalization either \emph{the full integral}, or (in case of
the so-called "pointwise Tur\'an problem") \emph{a particular one-point
value}. In the recent work \cite{GT}, more concrete application of the
Tur\'an and Delsarte problems are worked out for the case of Wiener's
problem in several dimensions. This type of approach seems to be
reasonable here, too.

Since for $H\gg 0$ we necessarily have $0<\widehat{H}(0)=\int H$, it
follows immediately that $A \ge B\ell$, whence $\gamma(\ell)\ge \ell$
and $\sigma(\ell)\ge \ell/2$. But in the virtue of our lower
estimations of $C(\ell)$ and $G(\ell)$, it is apparent that these are
far from being sharp. From the other side, it could be well that we
would have $\sigma(\ell)=C(\ell/2)$ and $\gamma(\ell)=G(\ell)$. The
essential part of the above constructions (i.e. the ones for the upper
estimation) targeted the computation (or estimation) of $\gamma(\ell)$
and $\sigma(\ell)$. We conjecture that in principle this approach is
best possible.

The interested reader can consult for further details about the Tur\'an
and Delsarte problems and their applications in e.g. packing problems
in \cite{Co}, \cite{CE}, \cite{CKMRV}, \cite{Go1}, \cite{Ivanov},
\cite{Revesz2009}, \cite{TuranLCA} and \cite{Vi}.

\end{document}